\newcommand\blfootnote[1]{%
  \begingroup
  \renewcommand\thefootnote{}\footnote{#1}%
  \addtocounter{footnote}{-1}%
  \endgroup
}
\newtheorem{theorem}{Theorem}[section]
\newtheorem{lemma}{Lemma}[section]
\newtheorem{corollary}{Corollary}[section]
\newtheorem{remark}{Remark}[section]
\numberwithin{equation}{section}
\title{Application of Hardy inequalities for some singular parabolic equations}
\date{}
\author{
Nikolai Kutev\thanks{Institute of Mathematics and Informatics, Bulgarian Academy of Sciences, 1113, Sofia, Bulgaria}
 \and Tsviatko Rangelov
 \footnotemark[1]
}
\begin{document}

\maketitle
\blfootnote{Corresponding author: T. Rangelov, rangelov@math.bas.bg}

\begin{abstract}
\noindent
Boundary value problems for non-linear parabolic equations with singular potentials are considered. Existence and non-existence results as an application of different Hardy inequalities are proved. Blow-up conditions are investigated too.
\end{abstract}

\vspace{2pt}

\noindent
{\bf Key words.} Singular parabolic equations, Hardy inequalities.

\vspace{2pt}
\noindent

{\bf Math. Subj. Class.}   26D10, 35P15

\section{Introduction}
\label{sec1}
We consider boundary value problem for the singular p-heat equation
\begin{equation}
\label{eq1}
\left\{\begin{array}{ll}
&u_t-\Delta_pu=\mu W(x)|u|^{p-2}u, \ \ \hbox{ for } x\in\Omega, t>0.
\\
&u(x,0)=u_0(x)\ \ \hbox{ for } x\in\Omega,
\\
&u(x,t)=0 \ \ \hbox{ for } x\in\partial\Omega, t>0.
\end{array}\right.
\end{equation}
Here $\Delta_pu=\hbox{div}(|\nabla u|^{p-2}\nabla u)$ is the p-Laplacian, $\mu=const\in \mathbb{R}$, $p>1$, $\Omega$ is a bounded domain in $\mathbb{R}^n$, $n\geq2$, $0\in\Omega$ and function $W(x)$ is singular on $\partial\Omega$ or/and at $0$.

More precisely we consider the following functions $W(x)$
\begin{equation}
\label{eq001}
\begin{array}{ll}
i) &W(x)=d^{-p}(x), d(x)=\hbox{dist}(x,\partial\Omega), \ p>1, p\neq n;
\\
ii)  &W(x)=d^{-p}(x)+\frac{p}{2(p-1)}d^{-p}\log^{-2}\frac{d(x)}{D}, d(x)=\hbox{dist}(x,\partial\Omega), p>n,
\\
&D=\sup_{x\in\Omega}d(x);
\\
iii) &W(x)=\left(|x|\log\frac{R}{|x|}\right)^{-n}  \hbox{ in }  \Omega\subset B_R=\{|x|<R\} ,  R>\hbox{ess sup}_{x\in\Omega}|x| ;
\\
iv) &W(x)=|x|^{m-n}\left|\varphi^m(x)-|x|^m\right|^{-p}  \hbox{ in }  \Omega=\{|x|<\varphi(\theta)\} , p>n , m=\frac{p-n}{p-1},
\\
&\theta=\frac{x}{|x|}  \hbox{ is the angular variable of }  x ,  \hbox{ and } \Omega\ \ \hbox{ is star-shaped with respect }
\\
&\hbox{ to an interior ball centered at the origin }, \varphi\in C^{0,1}.
\end{array}
\end{equation}
The interest to parabolic problems with singular potentials is due to the applications, for example in molecular physics, see \citet{Le67}, quantum cosmology, see \citet{BE97}, quantum mechanics, see \citet{BG84}, electron capture problems, see \citet{GGMS08}, porous medium of fluids, see \citet{AG04} and the combustion models, see \citet{Ge59}. Also in some reaction-diffusion problems involving the heat equation with critical reaction term, the regularized equation is of the type \eqref{eq1}.

In the remarkable pionering paper \citet {BG84} is proved that problem \eqref{eq1} with Hardy potential $W(x)=\frac{1}{|x|^2}$ and $p=2$  has a global solution for $\mu\leq\left(\frac{n-2}{2}\right)^2$. Moreover if $\mu>\left(\frac{n-2}{2}\right)^2$ then the solution of \eqref{eq1} blows-up for a finite time.

The results of Baras and Goldstein are extended in different directions. In \citet{VZ00} the authors present a complete description of the functional framework in which it is possible to obtain well-posedness for the singular heat equation.

The singular parabolic equation
$$
\left\{\begin{array}{ll}
u_t=\Delta u +\frac{\nabla\rho}{\rho}.\nabla u+\frac{\mu}{|x|^2}u, \ \ \hbox{ in } \mathbb{R}^n\times \mathbb{R}^+,
\\
u(x,0)=u_0(x)\geq0, \ \ \hbox{ for } x\in \mathbb{R}^n,
\end{array}\right.
$$
with Kolmogorov operator is studied in \citet{CPT19}, where $\rho$ is probability density on $\mathbb{R}^n$.

The quasilinear case
\begin{equation}
\label{eq01}
\left\{\begin{array}{ll}
u_t-\hbox{div}\left(a(x,t,\nabla u)\right)=\frac{\mu u}{|x|^2}+f(x,t), \ \ \hbox{ in } \Omega\times(0,T),
\\
u(x,0)=u_0(x)\geq0,  \ \ \hbox{ for } x\in \Omega,
\\
u(x,t)=0, \ \ \hbox{ for } (x,t)\in \partial\Omega\times(0,T),
\end{array}\right.
\end{equation}
is considered in \citet{Po19}. In the problem  \eqref{eq01} function $W(x)=|x|^{-2}$ is Hardy potential  and the asymptotic behavior of the solutions of \eqref{eq01} when $f\neq0$ is investigated.

Another generalization of \citet{BG84} is given in \citet{AA98} for problem \eqref{eq1} with general Hardy potential $W(x)=|x|^{-p}$, $1<p<n$, $n\geq3$ and $0\in\Omega$. The authors prove existence and nonexistence results when $\mu\leq\left(\frac{n-p}{p}\right)^p$ or $\mu>\left(\frac{n-p}{p}\right)^p$ respectively. Here $\left(\frac{n-p}{p}\right)^p$ is the optimal Hardy constant in the corresponding Hardy inequality.

In \citet{JYP12} the more general problem
\begin{equation}
\label{eq02}
\left\{\begin{array}{ll}
u_t-\Delta_pu=\frac{\mu}{|x|^s}|u|^{q-2}u, \ \ \hbox{ in } \Omega\times(0,\infty),
\\
u(x,0)=u_0(x)\geq0,  \ \ \hbox{ for } x\in \Omega,
\\
u(x,t)=0, \ \ \hbox{ for } (x,t)\in \partial\Omega\times(0,\infty),
\end{array}\right.
\end{equation}
with $1<p<n$, $0<s\leq p\leq q\leq\frac{n-s}{n-p}p$, $\Omega\subset\mathbb{R}^n$, $0\in\Omega$ is investigated. By means of Sobolev--Hardy inequality
$$
\left(\int_\Omega|\nabla u|^pdx\right)^{q/p}\geq C_{n,p,q,s}\int_\Omega\frac{|u|^q}{|x|^s}dx,\ \ u\in W_0^{1.p}(\Omega),
$$
existence of weak, very weak and entropy solutions to \eqref{eq02} are proved for $\mu\leq C_{n,p,q,s}$ and finite blow-up for $\mu>C_{n,p,q,s}$.

In \citet{AMPT14} where singular reaction--diffusion equation for p-Laplace operator with diffusion and  reaction term
\begin{equation}
\label{eq03}
\left\{\begin{array}{ll}
(u)^\theta_t-\Delta_pu=\frac{\mu|u|^{p-1}}{|x|^p}+u^q+f, \ \ \hbox{ in } \Omega\times(0,T), \ \ u\geq0
\\
u(x,0)=u_0(x)\geq0,  \ \ \hbox{ for } x\in \Omega,
\\
u(x,t)=0, \ \ \hbox{ for } (x,t)\in \partial\Omega\times(0,T),
\end{array}\right.
\end{equation}
is investigated. Here $\theta=1$ or $\theta=p-1$, $f(x)\geq0$, $1<p<n$, $q>0$. The authors prove existence of a critical exponent $q_+(p,\lambda)$ such that for $q<q_+(p,\lambda)$ there are global solutions to \eqref{eq03} and for $q>q_+(p,\lambda)$ there is no solution. In the semilinear case of \eqref{eq03} existence of a critical exponent $q_+(\lambda)$ of Fujita type, see \citet{Fu66} is
stated and existence of a solution is proved if and only if $q<q_+(\lambda)$.

Without a presence of a reaction  term, for $\theta=1$ and $p<\frac{2n}{n+2}$ an existence of a global solution for all $\mu>0$ in \eqref{eq03} is given in \citet{AP00} under suitable conditions on $f$ and $u_0$. The above results are extended to quasilinear parabolic equations without reaction term and with Caffarelli--Kohn--Nirenberg weights in \citet{DGP04}. When $\theta\neq1$, $\mu=0$, the one dimensional case of \eqref{eq03} without reaction term is studied in \citet{EV88}.

Finally, let us mention the results in \citet{BZ16} for the problem
$$
\left\{\begin{array}{ll}
u_t-\Delta u=\frac{\mu}{d^2(x)}u+f, \ \ \hbox{ in } \Omega\times(0,T),
\\
u(x,0)=u_0(x),  \ \ \hbox{ for } x\in \Omega,
\\
u(x,t)=0, \ \ \hbox{ for } (x,t)\in \partial\Omega\times(0,T),
\end{array}\right.
$$
which is singular on $\partial \Omega$. For $\mu\leq\frac{1}{4}$ the authors prove existence of a unique weak solution. If $\mu>\frac{1}{4}$ then there is no control, which means that the blow-up phenomena cannot be prevented. Here $\frac{1}{4}$ is the optimal constant in the corresponding Hardy inequality.

In the present paper we extend the result for \eqref{eq1} with $p=2$ in \citet{BG84} and for $2<p<n$ in \citet{AA98} when the Hardy potential $W(x)=|x|^{-p}$ is singular at the origin, to more general singular parabolic equations with $W(x)$ given in i) - iv) of \eqref{eq001} above. The new Hardy potentials are singular on the boundary of the domain or both at the origin and on the boundary. Let us recall that for the corresponding Hardy inequalities, the Hardy constants are optimal, see \eqref{eq2} - \eqref{eq5}. We prove that for $\mu$ less than the optimal Hardy constant problem \eqref{eq1} has a global weak solution. When $\mu$ is greater than the optimal Hardy constant than the solution blows-up for a finite time. To our best knowledge these type of singular parabolic equations are not consider in the literature.

In   Sect. \ref{sec2}  we recall some Hardy inequalities with weights $W(x)$ and prove global existence of weak solutions of \eqref{eq1} while Sect. \ref{sec3} deals with finite time blow-up of solutions to \eqref{eq1}.
\section{Existence of global solution}
\label{sec2}
In this section we prove  existence of a global, generalized solution of \eqref{eq1} when $W(x)$ has one of the  forms \eqref{eq001}i) - \eqref{eq001}iv)  and $\mu<C_p$ or $\mu<C_{p,n}$ respectively.

For this purpose we recall  some well-known Hardy`s inequalities which are important in the proof of the main results.

Inequality with kernel \eqref{eq001}i)
\begin{equation}
\label{eq2}
\int_\Omega|\nabla u|^pdx\geq C_p\int_\Omega\frac{|u|^p}{d^p(x)}dx,
\end{equation}
for $u\in W^{1,p}_0(\Omega)$, where $C_p=\left(\frac{p-1}{p}\right)^p$, $p>1, n\neq p, n\geq2$,  $d(x)=\hbox{dist}(x,\partial\Omega)$. Here $\Omega$ is a bounded $C^2$ smooth domain in $\mathbb{R}^n$ with nonnegative mean curvature $H(x)\geq0$. The constant $C_p=\left(\frac{p-1}{p}\right)^p$ is optimal. Inequality \eqref{eq2} is proved in \citet{LLL12}, see Theorem 1.2.

Inequality with kernel \eqref{eq001}ii).
\begin{equation}
\label{eq3}
\int_\Omega|\nabla u|^pdx\geq C\int_\Omega\frac{|u|^p}{d^p(x)}\left(1+\frac{1}{2}\left(\frac{p}{p-1}\right)\left(\log^{-2}\frac{d(x)}{D}\right)\right)dx,
\end{equation}
for $ u\in W^{1,p}_0(\Omega)$, where $D\geq\sup_{x\in\Omega}d(x)$,  $p>n\geq2$ and $\Omega\subset \mathbb{R}^n$ is bounded domain with $C^2$ smooth boundary, satisfying some geometric conditions like the mean convexity of $\partial\Omega$. The constant $C_p=\left(\frac{p-1}{p}\right)^p$ is optimal. Inequality \eqref{eq3} is proved in \citet{BFT03a}, see Theorem A.

Inequality with kernel \eqref{eq001}iii).
\begin{equation}
\label{eq4}
\int_\Omega|\nabla u|^ndx\geq C_n\int_\Omega\frac{|u|^n}{\left(|x|\log\frac{R}{|x|}\right)^n}dx,
\end{equation}
for $u\in W^{1,n}_0(\Omega)$, where $R\geq e \sup_{x\in \Omega}|x|$, $\Omega\subset \mathbb{R}^n, n\geq2$, $0\in\Omega$, $\Omega$ is a bounded domain, see Theorem 1.1 in \citet{AE05}.  Inequality \eqref{eq4} for $\Omega=B_1=\{|x|<1\}$  is proved in \citet{II15}. The constant $C_n=\left(\frac{n-1}{n}\right)^n$ is optimal;

Inequality with kernel \eqref{eq001}iv).
\begin{equation}
\label{eq5}
\int_\Omega|\nabla u|^pdx\geq C_{p,n}\int_\Omega\frac{|u|^p}{|x|^{n-m}|\varphi^m(x)-|x|^m|^p}dx,
\end{equation}
for  $u\in W^{1,p}_0(\Omega)$, where $p>n$, $n\geq2$, $\Omega=\{|x|<\varphi(\theta)\}\subset \mathbb{R}^n$, $\theta=\frac{x}{|x|}$ is angular variable of $x$ and $\Omega$  is a star shape domain with respect to a small ball centered at the origin, $\varphi\in C^{0,1}$. The constant   $C_{p,n}=\left(\frac{p-n}{p}\right)^p$ is optimal, see \citet{FKR17} and \citet{KR22}, Theorem 7.2.

By means of Hardy inequalities \eqref{eq2} - \eqref{eq5} we have the following results.
\begin{theorem}
\label{th1}
Suppose $\Omega$ is a bounded $C^2$ smooth domain in $\mathbb{R}^n$, $n\geq2$, with nonnegative mean curvature $H(x)\geq0$ and $p>1, p\neq n$. Then if $\mu<\left(\frac{p-1}{p}\right)^p$, $u_0(x)\in L^2(\Omega)$ problem \eqref{eq1} with $W(x)$ in \eqref{eq001}i) has a global solution
\begin{equation}
\label{eq7}
u(x,t)\in L^\infty\left((0,\tau),L^2(\Omega)\right)\cup L^p\left((0,\tau), W^{1,p}(\Omega)\right), \ \ \hbox{ for all } \tau>0.
\end{equation}
\end{theorem}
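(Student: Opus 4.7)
The plan is to construct the global solution by regularizing the singular potential, solving a sequence of approximate problems with bounded coefficients, and then passing to the limit using the Hardy inequality \eqref{eq2} to get uniform a priori estimates. Concretely, for $k \in \mathbb{N}$ I would replace $W(x) = d^{-p}(x)$ by the bounded truncation
\begin{equation*}
W_k(x) = \min\{W(x), k\} \quad \text{or} \quad W_k(x) = (d(x) + 1/k)^{-p},
\end{equation*}
and solve the regularized parabolic problem
\begin{equation*}
u^k_t - \Delta_p u^k = \mu W_k(x)|u^k|^{p-2}u^k, \qquad u^k(x,0) = u_0(x), \qquad u^k|_{\partial\Omega} = 0.
\end{equation*}
Because $W_k \in L^\infty(\Omega)$, this auxiliary problem fits into the classical monotone-operator / Galerkin framework for the $p$-Laplacian (e.g.\ J.-L.\ Lions' existence theory), so a weak solution $u^k \in L^\infty(0,\tau;L^2(\Omega)) \cap L^p(0,\tau;W^{1,p}_0(\Omega))$ with $u^k_t \in L^{p'}(0,\tau;W^{-1,p'}(\Omega))$ exists for each $k$ and every $\tau>0$.

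The second step is the key energy estimate. Testing the approximate equation with $u^k$ and integrating over $\Omega$ yields
\begin{equation*}
\frac{1}{2}\frac{d}{dt}\|u^k(\cdot,t)\|_{L^2(\Omega)}^{2} + \int_\Omega |\nabla u^k|^p\, dx = \mu \int_\Omega W_k(x)\,|u^k|^p\, dx.
\end{equation*}
Since $0 \le W_k(x) \le d^{-p}(x)$, the Hardy inequality \eqref{eq2} gives, for $\mu \ge 0$,
\begin{equation*}
\mu \int_\Omega W_k(x)\,|u^k|^p\, dx \;\le\; \frac{\mu}{C_p}\int_\Omega |\nabla u^k|^p\, dx,
\end{equation*}
with $C_p = \bigl(\tfrac{p-1}{p}\bigr)^p$; for $\mu < 0$ the right-hand side is non-positive and the estimate is trivial. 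Setting $\kappa := 1 - \mu_+/C_p > 0$ (which uses the hypothesis $\mu < C_p$) and integrating in time gives
\begin{equation*}
\|u^k(\cdot,t)\|_{L^2(\Omega)}^2 + 2\kappa \int_0^t \!\!\int_\Omega |\nabla u^k|^p\, dx\, ds \;\le\; \|u_0\|_{L^2(\Omega)}^2,
\end{equation*}
uniformly in $k$ and $t$. This furnishes uniform bounds of $\{u^k\}$ in $L^\infty(0,\tau;L^2(\Omega))$ and in $L^p(0,\tau;W^{1,p}_0(\Omega))$, and, via the equation, a uniform bound of $\{u^k_t\}$ in $L^{p'}(0,\tau;W^{-1,p'}(\Omega))$ plus the dual space of functions tested against $W_k|u^k|^{p-2}u^k$.

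The third step is the limit passage $k \to \infty$. By Banach--Alaoglu and the Aubin--Lions compactness lemma I would extract a subsequence with $u^k \rightharpoonup u$ weakly in $L^p(0,\tau;W^{1,p}_0(\Omega))$, weak-$\ast$ in $L^\infty(0,\tau;L^2(\Omega))$, strongly in $L^p(0,\tau;L^p(\Omega))$, and pointwise a.e. For the quasilinear diffusion one passes to the limit by the usual Minty monotonicity trick, exploiting the monotonicity of $-\Delta_p$, so that $|\nabla u^k|^{p-2}\nabla u^k \rightharpoonup |\nabla u|^{p-2}\nabla u$ in $L^{p'}$. For the singular reaction, pointwise convergence together with Fatou's lemma, combined with the uniform Hardy bound $\int_\Omega W_k|u^k|^p \le C_p^{-1}\int_\Omega |\nabla u^k|^p \le C$, lets one conclude that $W|u|^p \in L^1(\Omega\times(0,\tau))$; a Vitali/dominated convergence argument (with $W_k|u^k|^{p-2}u^k$ dominated via the inequality $W_k \le W$ and equi-integrability coming from the uniform Hardy bound) then identifies the limit of $\mu W_k(x)|u^k|^{p-2}u^k$ with $\mu W(x)|u|^{p-2}u$ in the distributional sense against smooth compactly supported test functions.

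The main obstacle is this last step: justifying the passage to the limit in the singular reaction term $W_k|u^k|^{p-2}u^k$ when the potential has no $L^q$-integrability for any reasonable $q$. One cannot simply invoke dominated convergence against an $L^1$ envelope; rather one must exploit the strict sign of $\kappa$ to get equi-integrability of $W_k|u^k|^p$ from the Hardy bound, and then test only against $\varphi \in C^\infty_c(\Omega\times(0,\tau))$ so that $W\varphi$ is bounded on the support of $\varphi$ for fixed $\varphi$, which together with pointwise convergence of $u^k$ and the Vitali convergence theorem closes the argument and yields the weak solution in the class \eqref{eq7}.
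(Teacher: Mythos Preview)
Your overall strategy---truncate $W$ to $W_k=\min\{W,k\}$, solve the regularized problem, test with $u^k$ and use the Hardy inequality \eqref{eq2} to get the uniform energy bound---coincides with the paper's proof line for line through the a~priori estimate. The paper proceeds identically to your inequality $\|u^k(t)\|_{L^2}^2+2\kappa\int_0^t\!\int_\Omega|\nabla u^k|^p\le\|u_0\|_{L^2}^2$.

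The divergence is in the limit $k\to\infty$. The paper does not use Aubin--Lions and Minty; instead it observes that $W_N$ is increasing in $N$, so by the comparison principle (tacitly using $u_0\ge 0$) the sequence $\{u_N\}$ is monotone, and then invokes Theorem~4.1 of Boccardo--Murat \cite{BM92} to obtain a.e.\ convergence of $\nabla u_N$, which identifies the limit of $|\nabla u_N|^{p-2}\nabla u_N$ directly. Your Minty route is more general in spirit, but as written it has a gap: to conclude $\limsup_k\int_0^T\!\int_\Omega|\nabla u^k|^{p-2}\nabla u^k\cdot\nabla(u^k-u)\le 0$ you must control the cross term $\mu\int_0^T\!\int_\Omega W_k|u^k|^{p-2}u^k(u^k-u)$, and strong convergence of $u^k$ in the \emph{unweighted} space $L^p(Q_T)$ does not kill it---you would need strong convergence in $L^p(W\,dx)$, which by Hardy is equivalent to strong convergence of $\nabla u^k$ in $L^p$, the very thing you are trying to establish. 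The Boccardo--Murat lemma is precisely the device that breaks this circularity: it delivers a.e.\ convergence of gradients from the $W^{-1,p'}$ bound on the right-hand side (which you do have, since $\int W_k|u^k|^{p-1}|\varphi|\le\|W^{1/p}u^k\|_{L^p}^{p-1}\|W^{1/p}\varphi\|_{L^p}\le C\|\nabla\varphi\|_{L^p}$ by Hardy again). Replacing your Minty step by an appeal to \cite{BM92} closes the argument.
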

\begin{theorem}
\label{th2}
Suppose $\Omega$ is a bounded convex $C^2$ smooth domain in $\mathbb{R}^n$, $n\geq2$. Then if $\mu<\left(\frac{p-1}{p}\right)^p$, $p>1, p\neq n$, $u_0(x)\in L^2(\Omega)$, problem \eqref{eq1} with $W(x)$ defined in \eqref{eq001}ii) has a global solution satisfying \eqref{eq7}.
\end{theorem}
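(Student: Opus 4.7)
The plan is to repeat the argument sketched for Theorem \ref{th1}, substituting the Hardy inequality \eqref{eq3} in place of \eqref{eq2}. Convexity of $\Omega$ provides the mean-convex boundary condition that makes \eqref{eq3} applicable, and a direct inspection shows that the weighted integrand on the right of \eqref{eq3} equals $C_p\,W(x)|u|^p$ for the $W$ in \eqref{eq001}ii), with $C_p=\left(\tfrac{p-1}{p}\right)^p$; this is the key ``compatibility'' that makes the energy estimate close.

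First I would truncate the singular potential by setting $W_k(x):=\min\{W(x),k\}$ and solve the regularized problem on $\Omega\times(0,\tau)$ via standard monotone-operator theory (or Faedo--Galerkin), obtaining approximate weak solutions $u_k\in L^\infty(0,\tau;L^2(\Omega))\cap L^p(0,\tau;W_0^{1,p}(\Omega))$ with $\partial_t u_k\in L^{p'}(0,\tau;W^{-1,p'}(\Omega))$. Testing the regularized equation against $u_k$ yields
\begin{equation*}
\tfrac{1}{2}\tfrac{d}{dt}\|u_k(t)\|_{L^2(\Omega)}^2+\int_\Omega|\nabla u_k|^p\,dx=\mu\int_\Omega W_k(x)|u_k|^p\,dx.
\end{equation*}
Since $0\leq W_k\leq W$, applying \eqref{eq3} to $u_k(\cdot,t)$ gives $\int_\Omega W_k|u_k|^p\,dx\leq C_p^{-1}\int_\Omega|\nabla u_k|^p\,dx$, whence
\begin{equation*}
\tfrac{1}{2}\tfrac{d}{dt}\|u_k(t)\|_{L^2(\Omega)}^2+\left(1-\tfrac{\mu}{C_p}\right)\int_\Omega|\nabla u_k|^p\,dx\leq 0.
\end{equation*}
Integrating in $t$ and using $\mu<C_p$ produces the bounds of \eqref{eq7} uniformly in $k$.

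Finally I would pass to the limit $k\to\infty$. The a priori estimates give weak-$\ast$ convergence in $L^\infty(0,\tau;L^2)$ and weak convergence in $L^p(0,\tau;W_0^{1,p})$; the uniform bound on $\partial_t u_k$ in $L^{p'}(0,\tau;W^{-1,p'})$ obtained from the equation itself, combined with Aubin--Lions, upgrades this to strong $L^p$ convergence and a.e.\ pointwise convergence along a subsequence. Convergence in the weighted term is controlled by testing against $\phi\in C^\infty_c(\Omega\times(0,\tau))$: on the compact support of any such $\phi$ one has $W_k=W$ for $k$ large, and strong $L^p_{\rm loc}$ convergence of $u_k$ suffices. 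The step I expect to require the most care is identification of the weak limit of the nonlinear flux $|\nabla u_k|^{p-2}\nabla u_k$; this is handled by the standard Minty--Browder monotonicity argument for the $p$-Laplacian, which combines the limiting energy identity with the monotonicity inequality tested against an arbitrary admissible $v$.
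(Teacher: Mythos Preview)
Your proposal follows essentially the same strategy as the paper: truncate $W$ to $W_k=\min\{W,k\}$, derive a uniform energy estimate for the approximants by testing with $u_k$ and absorbing the weighted term via the Hardy inequality \eqref{eq3}, then pass to the limit. The paper's proof (stated jointly for Theorems~\ref{th1}--\ref{th4}) is exactly this.

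The only point of departure is the limit passage. The paper does not use Aubin--Lions plus Minty--Browder; instead it notes that (for nonnegative data and $\mu\ge 0$) the sequence $u_N$ is monotone by the comparison principle, and then invokes Theorem~4.1 of Boccardo--Murat \cite{BM92} to obtain a.e.\ convergence of $\nabla u_N$, which identifies the nonlinear flux directly. Your route is also standard, but be aware that the Minty step needs $\limsup_k \int_0^T\!\int_\Omega |\nabla u_k|^p \le \int_0^T\!\int_\Omega \chi\cdot\nabla u$; substituting the energy identity on the left brings in $\limsup_k \mu\int W_k|u_k|^p$, for which strong $L^p$ convergence of $u_k$ and Fatou give only a lower bound a priori, since $W$ is unbounded. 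Closing this requires an additional argument (e.g.\ a Vitali-type uniform integrability coming from the Hardy bound, or simply citing Boccardo--Murat for a.e.\ gradient convergence as the paper does). On the other hand, your Aubin--Lions compactness avoids the implicit sign hypothesis $u_0\ge 0$ that the paper's monotonicity argument relies on.
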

\begin{theorem}
\label{th3}
Suppose $\Omega$ is a bounded domain in $\mathbb{R}^n$, $n\geq2$, $0\in\Omega$. Then if $\mu<\left(\frac{n-1}{n}\right)^n$ , $u_0(x)\in L^2(\Omega)$, problem \eqref{eq1} with $W(x)$ defined in \eqref{eq001}iii) has a global solution satisfying \eqref{eq7}.
\end{theorem}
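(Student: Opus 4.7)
The overall plan is to mimic the energy-estimate/Galerkin scheme already used (implicitly) for Theorems \ref{th1} and \ref{th2}, substituting the logarithmic Hardy inequality \eqref{eq4} for \eqref{eq2}. Note that the weight \eqref{eq001}iii) is tailored to the case $p=n$, so throughout the proof I would take $p=n$ in \eqref{eq1}. First I would introduce a regularized problem that kills the singularity at the origin and provides a classical solution $u_k$: for example, replace $W(x)$ by $W_k(x)=\min\{W(x),k\}$ (alternatively, one may perturb by $\varepsilon|x|^n$ inside the weight, or solve by Galerkin truncation on an $L^2(\Omega)$-orthonormal basis of $W^{1,n}_0(\Omega)$). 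The regularized problem admits a global weak solution $u_k\in L^\infty(0,\tau;L^2(\Omega))\cap L^n(0,\tau;W^{1,n}_0(\Omega))$ by standard theory for quasilinear parabolic equations with bounded zero-order terms.

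The next step is the fundamental energy estimate. Multiplying the approximate equation by $u_k$ and integrating by parts yields
\begin{equation*}
\frac{1}{2}\frac{d}{dt}\|u_k(t)\|_{L^2(\Omega)}^2+\int_\Omega|\nabla u_k|^n\,dx=\mu\int_\Omega W_k(x)|u_k|^n\,dx.
\end{equation*}
Using $W_k\leq W$ and the Hardy inequality \eqref{eq4}, the right-hand side is bounded by $(\mu/C_n)\int_\Omega|\nabla u_k|^n\,dx$, and since $\mu<C_n=((n-1)/n)^n$ the coefficient $1-\mu/C_n$ is strictly positive. Integrating in time from $0$ to $\tau$ gives
\begin{equation*}
\|u_k(\tau)\|_{L^2(\Omega)}^2+2\Bigl(1-\frac{\mu}{C_n}\Bigr)\int_0^\tau\int_\Omega|\nabla u_k|^n\,dx\,dt\leq\|u_0\|_{L^2(\Omega)}^2,
\end{equation*}
which is the required uniform bound yielding \eqref{eq7} for the approximations. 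From the equation, $\partial_t u_k$ is controlled in a negative Sobolev space, so a compactness argument (Aubin–Lions) furnishes a subsequence converging strongly in $L^n(0,\tau;L^n(\Omega))$ and a.e. to some limit $u$ lying in the space displayed in \eqref{eq7}.

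Finally I would pass to the limit $k\to\infty$ in the weak formulation. The diffusion term $|\nabla u_k|^{n-2}\nabla u_k$ is handled by the standard monotonicity argument of Minty–Browder for the $n$-Laplacian, relying on the weak convergence in $L^n$ of $\nabla u_k$ and the strong $L^n$ convergence of $u_k$. For the singular lower-order term $\mu W_k|u_k|^{n-2}u_k$, the Hardy inequality together with the uniform gradient bound gives equi-integrability of $W_k|u_k|^n$, so Vitali's theorem (or Fatou combined with monotone convergence) allows passage to the limit against test functions $\varphi\in C^\infty_c(\Omega\times(0,\tau))$; the boundary condition is preserved since $u_k\in W^{1,n}_0(\Omega)$ weakly, and the initial datum is taken in the sense of $C([0,\tau];L^2(\Omega))$.

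The main obstacle is the identification of the limit of $|\nabla u_k|^{n-2}\nabla u_k$ and the passage to the limit in the singular term near $x=0$: the weight $W(x)=(|x|\log(R/|x|))^{-n}$ is not locally integrable, so one cannot simply use dominated convergence. The mechanism that saves the day is exactly the Hardy inequality \eqref{eq4}, which transfers control of the singular mass to control of the gradient; since $\mu/C_n<1$, the absorbed portion leaves a strictly positive coercive term in the energy identity, and this strict inequality is what provides the uniform equi-integrability needed for the limit passage.
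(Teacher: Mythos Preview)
Your proposal is correct and shares its backbone with the paper's proof: truncate the weight by $W_k=\min\{W,k\}$, obtain the uniform energy estimate by testing with $u_k$ and invoking the logarithmic Hardy inequality \eqref{eq4} with the strict inequality $\mu<C_n$, then pass to the limit. The difference lies in how the limit $k\to\infty$ is taken. The paper argues that, by the comparison principle, the sequence $\{u_N\}$ is monotone in $N$ (since $W_N\ge W_M$ for $N\ge M$) and then invokes Theorem~4.1 of Boccardo--Murat to obtain a.e.\ convergence of the gradients, which identifies the limit of $|\nabla u_N|^{n-2}\nabla u_N$ directly. You instead use Aubin--Lions compactness for strong convergence of $u_k$, the Minty--Browder monotonicity trick for the $n$-Laplacian, and Vitali's theorem (driven by the Hardy bound) for the singular lower-order term. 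Your route is more self-contained and does not rely on a comparison principle---hence it does not implicitly need a sign condition on $u_0$ to make the monotonicity of $u_N$ work---while the paper's route is shorter once those two black boxes are granted.
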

\begin{theorem}
\label{th4}
Suppose $\Omega=\{|x|<\varphi(x)\}\subset \mathbb{R}^n$, $n\geq2$, is a star-shaped domain with respect to a small ball centered at the origin. Then if $\mu<\left(\frac{p-n}{p}\right)^p$, $p>n$, $u_0(x)\in L^2(\Omega)$, problem \eqref{eq1} with $W(x)$ given by \eqref{eq001}iv) has a global solution, satisfying \eqref{eq7}.
\end{theorem}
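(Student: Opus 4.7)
The plan is to replicate the Galerkin/energy scheme used for Theorems \ref{th1}--\ref{th3}, with the Hardy inequality \eqref{eq5} supplying the coercivity. Since $\varphi\in C^{0,1}$ the domain $\Omega$ is only Lipschitz, but this still ensures that $C^\infty_c(\Omega)$ is dense in $W^{1,p}_0(\Omega)$. Fix a countable linearly independent and complete sequence $\{w_k\}\subset C^\infty_c(\Omega)$ and seek approximations $u_m(x,t)=\sum_{k=1}^m c^m_k(t)\,w_k(x)$. Projecting \eqref{eq1} onto $\mathrm{span}\{w_1,\dots,w_m\}$ produces a nonlinear ODE system for the coefficients $c^m_k(t)$; since each $w_k$ has compact support in $\Omega$, the singular weight $W(x)$ is bounded on this support and all coefficients of the system depend continuously on $c^m_1,\dots,c^m_m$, so Cauchy--Peano yields a local-in-time solution.

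Testing the projected equation against $u_m$ itself gives
\begin{equation*}
\frac{1}{2}\frac{d}{dt}\|u_m(\cdot,t)\|_{L^2(\Omega)}^2+\int_\Omega|\nabla u_m|^p\,dx=\mu\int_\Omega W(x)\,|u_m|^p\,dx.
\end{equation*}
Applying \eqref{eq5}, whose hypotheses coincide with the present geometric assumptions, and invoking $\mu<C_{p,n}=\bigl(\tfrac{p-n}{p}\bigr)^p$, one obtains
\begin{equation*}
\frac{1}{2}\frac{d}{dt}\|u_m(\cdot,t)\|_{L^2(\Omega)}^2+\Bigl(1-\frac{\mu}{C_{p,n}}\Bigr)\int_\Omega|\nabla u_m|^p\,dx\le 0,
\end{equation*}
with $1-\mu/C_{p,n}>0$. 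Integrating in $t$ with $u_0\in L^2(\Omega)$ produces uniform bounds of $u_m$ in $L^\infty(0,\tau;L^2(\Omega))\cap L^p(0,\tau;W^{1,p}_0(\Omega))$ for every $\tau>0$, which in particular prevents finite-time blow-up of the Galerkin system and promotes the local $u_m$ to global solutions. The equation itself supplies a dual bound for $\partial_t u_m$ in $L^{p'}(0,\tau;W^{-1,p'}(\Omega))$.

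It remains to pass to the limit $m\to\infty$. Weak-$*$ and weak compactness furnish a limit $u$ in the class \eqref{eq7}; Aubin--Lions compactness delivers strong convergence in $L^p(0,\tau;L^p(\Omega))$ and, along a subsequence, a.e. convergence. The $p$-Laplacian term is then handled by the classical Minty--Browder monotonicity argument. The main obstacle is the identification of the limit of $\int_0^\tau\!\!\int_\Omega W(x)|u_m|^{p-2}u_m\,\psi\,dx\,dt$ for smooth test functions $\psi$, since $W$ is singular both at the origin and on $\partial\Omega$ so no pointwise domination is at hand. This is resolved by observing that \eqref{eq5} combined with the energy bound already controls $\int_0^\tau\!\!\int_\Omega W(x)|u_m|^p\,dx\,dt$ uniformly; together with a.e. convergence this yields equi-integrability of $W(x)|u_m|^{p-2}u_m\,\psi$, and Vitali's convergence theorem completes the passage to the limit. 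The initial condition is recovered from the bound on $\partial_t u_m$ by a standard trace argument, finishing the proof exactly as in Theorems \ref{th1}--\ref{th3}.
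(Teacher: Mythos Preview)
Your route is genuinely different from the paper's. The paper does \emph{not} run a Galerkin scheme for Theorems~\ref{th1}--\ref{th4}; it truncates the singular weight, setting $W_N(x)=\min\{N,W(x)\}$, and solves the regularised problem \eqref{eq8} with bounded coefficients. The energy identity and Hardy inequality \eqref{eq5} then give the same uniform bound you obtain; the comparison principle makes $\{u_N\}$ monotone in $N$, and the limit is taken via the Boccardo--Murat a.e.\ gradient-convergence theorem rather than Aubin--Lions and Minty. Truncating $W$ instead of projecting onto finite-dimensional subspaces buys two things: the approximating problems are standard quasilinear parabolic equations with bounded data, and compactness comes for free from monotonicity, so no dual estimate on the time derivative is needed.

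Your Galerkin argument can in principle be pushed through, but two points need repair. First, for the weight in \eqref{eq001}iv) the factor $|x|^{m-n}$ has $m-n<0$, so $W$ blows up at the origin as well as on $\partial\Omega$; a compact support in $\Omega$ does \emph{not} make $W$ bounded there. What saves the ODE step is only that $W\in L^1_{\mathrm{loc}}(\Omega)$, which you should state instead. Second, and more substantively, the asserted bound $\partial_t u_m\in L^{p'}(0,\tau;W^{-1,p'}(\Omega))$ does not follow from the Galerkin equations for an arbitrary basis $\{w_k\}\subset C^\infty_c(\Omega)$: those equations control $\langle\partial_t u_m,v\rangle$ only for $v\in\operatorname{span}\{w_1,\dots,w_m\}$, and there is no uniform bound on the projection $W^{1,p}_0\to V_m$ in the $W^{1,p}_0$-norm. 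Without this, Aubin--Lions is unavailable, and with it the a.e.\ convergence that feeds both your Minty step and the Vitali argument. The usual cure (an eigenfunction basis with stable projections) is awkward here because for $p>n$ points have positive $p$-capacity, so you cannot simply excise the origin either. The paper's truncation of $W$ sidesteps exactly this difficulty.
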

\begin{proof}[Proof of Theorems \ref{th1} - \ref{th4}]
In order to prove the Theorems \ref{th1} - \ref{th4} we consider the truncated function
\begin{equation}
\label{eq77}
W_N(x)=\min\{N, W(x)\}, N=1,2,\ldots
\end{equation}
where $W(x)$ is one of the functions \eqref{eq001}i) - \eqref{eq001}iv). We considerbe the solution of the truncated problem
\begin{equation}
\label{eq8}
\left\{\begin{array}{ll}
&u_{N,t}-\Delta_pu_N=\mu W_N(x)|u_N|^{p-2}u_N, \ \ \hbox{ for } x\in\Omega, t>0.
\\
&u_N(x,0)=u_0(x) \ \ \hbox{ for } x\in\Omega,
\\
&u_N(x,t)=0 \ \ \hbox{ for } x\in\partial\Omega, t>0.
\end{array}\right.
\end{equation}
From Proposition 1 in \citet{DGP04}, Theorem 1.2 in \citet{Li69} and the references in \citet{Si87}, problem \eqref{eq8} has a unique solution in distributional sense.

Multiplying \eqref{eq8} with $u_N$ and integrating by parts we get from the Hardy`s inequalities \eqref{eq2} - \eqref{eq5} the following estimates for every $T>0$
\begin{equation}
\label{eq9}
\begin{array}{ll}
&\int_{\Omega}|u_N(x,T)|^2dx+\int_0^T\int_{\Omega}|\nabla u_N(x,t)|^pdxdt
\\
&=\int_{\Omega}|u_0(x)|^2dx+\mu\int_0^T\int_{\Omega}W_N(x)|u_N(x,t)|^pdxdt
\\
&\leq\int_{\Omega}|u_0(x)|^2dx+\mu C^{-1}\int_0^T\int_{\Omega}|\nabla u_N(x,t)|^pdxdt,
\end{array}
\end{equation}
where
 \begin{equation}
 \label{eq99}
 C=\left\{\begin{array}{ll} &C_p=\left(\frac{p-1}{p}\right) \hbox{ in Theorems \ref{th1} and \ref{th2}},
 \\
 &C_n=\left(\frac{n-1}{n}\right) \hbox{ in Theorem \ref{th3}},
 \\
 &C_{n,p}=\left(\frac{p-n}{p}\right) \hbox{ in Theorem \ref{th4}}.
 \end{array}\right.
 \end{equation}
  Since $\mu C^{-1}<1$ we get from \eqref{eq9} the energy estimate
$$
\int_{\Omega}|u_N(x,T)|^2dx+(1-\mu C^{-1})\int_0^T\int_{\Omega}|\nabla u_N(x,t)|^pdxdt
\leq \int_{\Omega}|u_0(x)|^2dx.
$$
From the comparison principle $u_N(x,t)$ is a nondecreasing sequence of functions because $W_N(x,t)\geq W_M(x,t)$ for every $x\in\Omega$, $t>0$ and $N\geq M$. We can pass to the limit $N\rightarrow\infty$ by using Theorem 4.1 in \citet{BM92}. Thus the global solution $u(x,t)$ of \eqref{eq1} is defined as a limit of the solution $u_N(x,t)$ of the truncated problem \eqref{eq8} and $u(x,t)$ has the regularity properties given in \eqref{eq7}.
\end{proof}
\begin{corollary}
\label{cor2-1}
Under the condition of Theorem \ref{th4} if $u_0(x)\in L^\infty(\Omega)$ then $u(x,t)\in L^\infty(\Omega\times[0,T])$.
\end{corollary}
\begin{proof}
Since
$$
\begin{array}{ll}&|\nabla u_N|^{p-2}\nabla u_N.\nabla u_N=|\nabla u_N|^p, \quad ||\nabla u_N|^{p-2}\nabla u_N|=|\nabla u_N|^{p-1},
\\
&|\mu W_N(x)|u_N|^{p-2}u_N|\leq\mu N|u_N|^{p-1}
\end{array}
$$
conditions $(\textbf{B}_1) - (\textbf{B}_4)$ in Theorem 3.2, Chap. 5 in \citet{DiB93} are satisfied with $C_0=C_1=1$, $C_2=\mu N$, $c_i=0, \varphi_i=0, i=0,1,2$ and $\delta=p$.

If $u^+_N(x,t)=\max\{u^N(x,t),0\}$, then $u_N^+$ is a nonnegative weak subsolution of \eqref{eq8} and estimate (3.6) in \citet{DiB93}, Theorem 3.2 becomes
\begin{equation}
\label{eq2-7-1}
\hbox{ess sup}_\Omega u_N^+(.,t)\leq\hbox{ess sup}_\Omega u^+_0(x)+\max \left\{\left(\gamma\int_0^T\int_\Omega(u_N^+(x,t))^pdxdt\right)^{\frac{n}{2p}},1 \right\}
\end{equation}
for some constant $\gamma$ depending on the data.

In the same way $-u_N^-(x,t)=-\min\{u^N(x,t),0\}$ is a nonnegative subsolution of \eqref{eq8} and \eqref{eq2-7-1} gives an estimate from below for $u_N(x,t)$.
\end{proof}
\begin{remark}
\label{rem2-1}
The same estimate holds for more general problem
\begin{equation}
\label{eq2-7-2}
\left\{\begin{array}{ll}
&u_{N,t}-\hbox{div}\left(h_N(x)\nabla u_N|^{p-2}\nabla u_N\right)-\mu \psi_N(x)|u_N|^{p-2}u_N=f(x,t), \ \ \hbox{ in } \Omega\times(0,T),
\\
&u_N(0,t)=0 \ \ \hbox{ for } (x,t)\in\partial\Omega\times(0,T),
\\
&u_N(x,0)=u_0(x) \ \ \hbox{ for } x\in\Omega.
\end{array}\right.
\end{equation}
where $u_0(x)\in L^\infty(\Omega)$.

In this case
\begin{equation}
\label{eq2-7-3}\begin{array}{l}
h_N(x)=\min\{|x|^l,N\} \hbox{ for }  l<0,  h_N(x)=1 \hbox{ for }  l=0,   \hbox{ and }  h_N(x)=|x|^l+\frac{1}{N} \hbox{ for }  l>0 ,
\\
\psi_N(x)=\left(|x|^{p-l}+\frac{1}{N}\right)^{-1}\left(1+\frac{1}{N}-\left(\frac{|x|}{\varphi(x)}\right)^{\frac{l+n-p}{p}}\right)^{-p}
\end{array}
\end{equation}
Here  $\Omega\subset\mathbb{R}^n$, $n\geq2$, $0\in\Omega$, is a star-shaped domain with respect to a small ball centered at the origin,   $l+n>p>1$, $l<p$, see for more details Chap. 9.2 in \citet{KR22}. The only difference are the constants $C_0=\frac{1}{N}$, $C_1=\max\{N,(\hbox{ diam}\Omega)^l+1\}$, $C_2=\nu N^{p+1}$, $c_i=0, \varphi_i=0, i=0,1,2$,$\delta=p$.
\end{remark}

\section{Finite time blow-up}
\label{sec3}
In this section we prove finite time blow-up
of the solutions to the problem \eqref{eq1} with $u_0(x)>0$ for $x\in\bar{\Omega}$, i.e.,
\begin{equation}
\label{eq35}
\left\{\begin{array}{ll}
&u_t-\Delta_pu=\mu W(x)|u|^{p-2}u, \ \ \hbox{ for } x\in\Omega, t>0.
\\
&u(x,0)=u_0(x)>0 \ \ \hbox{ for } x\in\bar{\Omega},
\\
&u(x,t)=0 \ \ \hbox{ for } x\in\partial\Omega, t>0,
\end{array}\right.
\end{equation}
for $W(x)$ defined in \eqref{eq001}i) - \eqref{eq001}iv).

If $\lambda_{1N}$, $\phi_{1N}(x)$ are the first eigenvalue and the first eigenfunction of the problem
$$
\left\{\begin{array}{ll}
&-\Delta_p\phi_{1N}=\lambda_{1N}W_N(x)|\phi_{1N}|^{p-2}\phi_{1N}, \ \ x\in\Omega,
\\
&\phi_{1N}=0, \ \ x\in \partial\Omega
\end{array}\right.
$$
where $W_N(x)$ is defined in\eqref{eq77}, then the following result holds.
\begin{lemma}
\label{lem1}
Suppose $\Omega$ is a bounded smooth domain in $\mathbb{R}^n$, $n\geq2$ and $\lambda>C$, where  $C$ is given  in \eqref{eq99}. Then we have
\begin{equation}
\label{eq37}
\lambda_{1N}\geq C, \ \ N=1,2,\ldots
\end{equation}
\begin{equation}
\label{eq38}
\lim_{N\rightarrow\infty}\lambda_{1N}=C
\end{equation}
\end{lemma}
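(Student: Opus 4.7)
The plan is to exploit the variational characterization of the first eigenvalue via the Rayleigh quotient together with monotone convergence of the truncations $W_N \nearrow W$.

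First, I would recall that $\lambda_{1N}$ is given by
\[
\lambda_{1N}=\inf_{u\in W_0^{1,p}(\Omega)\setminus\{0\}}\frac{\int_\Omega|\nabla u|^p\,dx}{\int_\Omega W_N(x)|u|^p\,dx},
\]
which is well-defined and attained at $\phi_{1N}$ since $W_N$ is bounded (so the standard $p$-Laplacian eigenvalue theory applies). Similarly, the optimality of the constant $C$ in the appropriate Hardy inequality from \eqref{eq2}--\eqref{eq5} says exactly that
\[
C=\inf_{u\in W_0^{1,p}(\Omega)\setminus\{0\}}\frac{\int_\Omega|\nabla u|^p\,dx}{\int_\Omega W(x)|u|^p\,dx}.
\]

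For \eqref{eq37}, I would simply observe that $W_N(x)\le W(x)$ pointwise, so for every admissible $u$ the denominator with $W_N$ is no larger than the denominator with $W$. Combined with the Hardy inequality $\int|\nabla u|^p\,dx\ge C\int W(x)|u|^p\,dx$, this gives
\[
\frac{\int_\Omega|\nabla u|^p\,dx}{\int_\Omega W_N(x)|u|^p\,dx}\ge\frac{\int_\Omega|\nabla u|^p\,dx}{\int_\Omega W(x)|u|^p\,dx}\ge C,
\]
and taking the infimum over $u$ yields $\lambda_{1N}\ge C$.

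For \eqref{eq38}, since $W_N$ is nondecreasing in $N$ the Rayleigh quotient in the definition of $\lambda_{1N}$ is nonincreasing in $N$, so $\lambda_{1N}$ decreases and has a limit $\lambda_\infty\ge C$. To prove the reverse inequality, I would use optimality of $C$: for each $\varepsilon>0$ pick a test function $v_\varepsilon\in W_0^{1,p}(\Omega)$ with $\int_\Omega W(x)|v_\varepsilon|^p\,dx$ finite (for instance, any compactly supported smooth function away from the singularities of $W$, refined by a minimizing sequence) such that
\[
\frac{\int_\Omega|\nabla v_\varepsilon|^p\,dx}{\int_\Omega W(x)|v_\varepsilon|^p\,dx}<C+\varepsilon.
\]
By the monotone convergence theorem applied to $W_N|v_\varepsilon|^p\nearrow W|v_\varepsilon|^p$, I have $\int_\Omega W_N(x)|v_\varepsilon|^p\,dx\to\int_\Omega W(x)|v_\varepsilon|^p\,dx$ as $N\to\infty$. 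Hence
\[
\lambda_{1N}\le\frac{\int_\Omega|\nabla v_\varepsilon|^p\,dx}{\int_\Omega W_N(x)|v_\varepsilon|^p\,dx}\xrightarrow[N\to\infty]{}\frac{\int_\Omega|\nabla v_\varepsilon|^p\,dx}{\int_\Omega W(x)|v_\varepsilon|^p\,dx}<C+\varepsilon,
\]
so $\lambda_\infty\le C+\varepsilon$. Letting $\varepsilon\to 0$ gives $\lambda_\infty\le C$, and combined with $\lambda_\infty\ge C$ we conclude $\lambda_{1N}\to C$.

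The main obstacle is the construction step for $v_\varepsilon$: one needs the minimizing sequence for the Hardy Rayleigh quotient to consist of functions whose weighted $L^p$ norm against the singular $W$ is finite, so that the monotone convergence argument applies. In each of cases \eqref{eq001}i)--\eqref{eq001}iv) this is standard because test functions supported away from the singular set of $W$ are dense in the minimization (the infimum is not attained on finite-energy functions, but sequences with $\operatorname{supp} v_\varepsilon$ avoiding neighborhoods of $\partial\Omega$ and/or $0$ recover the optimal constant). I would therefore verify this density/approximation property separately for each weight in \eqref{eq001} by combining a cutoff argument near the singularity with the explicit near-extremal families used in the proofs of \eqref{eq2}--\eqref{eq5} in \citet{LLL12,BFT03a,AE05,FKR17,KR20}.
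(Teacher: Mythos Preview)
Your argument is correct and follows essentially the same route as the paper: both parts rest on the Rayleigh quotient characterization of $\lambda_{1N}$, the pointwise inequality $W_N\le W$, and the optimality of $C$ in the corresponding Hardy inequality. The paper phrases \eqref{eq38} as a short contradiction (assume $\lim\lambda_{1N}=C+2\delta$, then produce a competitor with $W_N$-quotient below $C+\delta$), whereas you carry out the equivalent direct limit using monotone convergence; the content is the same.

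One remark: your ``main obstacle'' paragraph is unnecessary. Once the Hardy inequality $\int_\Omega|\nabla u|^p\,dx\ge C\int_\Omega W|u|^p\,dx$ holds for all $u\in W_0^{1,p}(\Omega)$ with $C>0$, it automatically forces $\int_\Omega W|u|^p\,dx<\infty$ for every such $u$. Hence any near-minimizer $v_\varepsilon\in W_0^{1,p}(\Omega)$ already has $\int_\Omega W|v_\varepsilon|^p\,dx<\infty$, and the monotone convergence theorem applies without further cutoff or density arguments.
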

\begin{proof}
Inequality \eqref{eq37} follows from  the Rayleigh quotient and Hardy inequalities \eqref{eq2} - \eqref{eq5}. Indeed
$$
\lambda_{1N}=\inf_{v\in W_0^{1,p}(\Omega)}\frac{\int_\Omega|\nabla v|^pdx}{\int_\Omega W_N(x)|v|^pdx}
\geq \inf_{v\in W_0^{1,p}(\Omega)}\frac{\int_\Omega|\nabla v|^pdx}{\int_\Omega W(x)|v|^pdx}\geq C
$$
In order to proof \eqref{eq38} we suppose by contradiction that
\begin{equation}
\label{eq39}
\lim_{N\rightarrow\infty}\lambda_{1N}=C+2\delta, \hbox{ for some } \delta>0.
\end{equation}
From the optimality of the constant $C$ in Hardy inequalities \eqref{eq2} - \eqref{eq5} there exists $v_N\in W_0^{1,p}(\Omega)$ such that
$$
\frac{\int_\Omega|\nabla v_N|^pdx}{\int_\Omega W_N(x)|v_N|^pdx}<C+\delta
$$
Thus $\lambda_{1N}\leq C+\delta$ for every $N=1,2,\ldots$ which contradicts  \eqref{eq39}.
\end{proof}
We will construct a positive sub-solution to \eqref{eq35} by the method of separation of variables, i.e., a sub-solution $v(x,t)=X(x)T(t)$ where $X(x)$ and $T(t)$ satisfy the problems.
\begin{equation}
\label{eq40}
\left\{\begin{array}{ll}
&-\Delta_pX(x)-\mu W_N(x)X^{p-1}(x)=- X(x), \ \ x\in \Omega,
\\
&X(x)=0, \ \ x\in\partial\Omega.
\end{array}\right.
\end{equation}
\begin{equation}
\label{eq41}
\left\{\begin{array}{ll}
&T'(t)= T^{p-1}(t), \ \ t>0,
\\
&T(0)=T_0.
\end{array}\right.
\end{equation}
Since $T(t)=T_0\left[1-(p-2) T_0^{p-2}t\right]^{-\frac{1}{p-2}}$ for $p>2$, then  $T(t)$ blows-up for $t=\left[(p-2)T^{p-2}_0\right]^{-1}$.

For $a^{p-2}=\mu$, after the change of the functions $aX=Y$, problem \eqref{eq40} becomes
\begin{equation}
\label{eq42}
\left\{\begin{array}{ll}
&-\Delta_pY(x)=\mu \left(W_N(x)Y^{p-1}(x)-Y(x)\right), \ \ x\in \Omega,
\\
&Y(x)=0, \ \ x\in\partial\Omega.
\end{array}\right.
\end{equation}
We need the following lemma (see Lemma 5.1 in \citet{AA98})
\begin{lemma}
\label{lem2}
(1) There exists a constant $A>0$ such that if \eqref{eq42} has a positive solution $Y(x)$ then $\|Y\|_{L^\infty(\Omega)}>A$;

(2) If $\lambda_{1N}$ is the first eigenvalue of $-\Delta_p$ with weight $W_N(x)$ then $\lambda_{1N}$ is the unique bifurcation point from infinity for the problem \eqref{eq42}
\end{lemma}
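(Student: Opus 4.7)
The plan is to prove (1) by a direct energy estimate combined with the Rayleigh characterization of $\lambda_{1N}$, and to prove (2) by a blow-up rescaling argument that reduces the problem at large solutions to the weighted eigenvalue problem for $-\Delta_p$ with weight $W_N$.

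For part (1), I would test equation \eqref{eq42} with $Y$ itself. Integration by parts and the boundary condition give
$$
\int_\Omega |\nabla Y|^p\,dx + \mu\int_\Omega Y^2\,dx = \mu\int_\Omega W_N(x)\, Y^p\,dx.
$$
Using the Rayleigh characterization of $\lambda_{1N}$ recalled in the proof of Lemma \ref{lem1}, namely $\int_\Omega |\nabla Y|^p\,dx \geq \lambda_{1N}\int_\Omega W_N(x)\,Y^p\,dx$, this rewrites as $(\mu-\lambda_{1N})\int_\Omega W_N\,Y^p\,dx \geq \mu\int_\Omega Y^2\,dx$, which in particular forces $\mu>\lambda_{1N}$. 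The bounds $W_N\leq N$ and $Y^{p-2}\leq \|Y\|_{L^\infty(\Omega)}^{p-2}$ (valid in the regime $p>2$ which is the one relevant for the blow-up construction, cf.\ \eqref{eq41}) then give $(\mu-\lambda_{1N})\,N\,\|Y\|_{L^\infty(\Omega)}^{p-2}\int_\Omega Y^2\,dx \geq \mu\int_\Omega Y^2\,dx$, and dividing through yields $\|Y\|_{L^\infty(\Omega)}^{p-2}\geq \mu/[N(\mu-\lambda_{1N})]$, which is the constant $A$ in (1).

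For part (2), I would argue that $\lambda_{1N}$ is the unique bifurcation point from infinity by a rescaling argument. Let $(\mu_k,Y_k)$ be a sequence of positive solutions of \eqref{eq42} with $M_k := \|Y_k\|_{L^\infty(\Omega)} \to \infty$ and $\mu_k \to \mu_*$. Setting $Z_k := Y_k/M_k$, we have $\|Z_k\|_{L^\infty(\Omega)} = 1$ and
$$
-\Delta_p Z_k = \mu_k W_N(x)\, Z_k^{p-1} - \mu_k M_k^{2-p}\, Z_k.
$$
The energy identity for $Z_k$ gives a uniform $W_0^{1,p}$-bound, so along a subsequence $Z_k$ converges weakly in $W_0^{1,p}(\Omega)$ and strongly in $L^p(\Omega)$ to some $Z\geq 0$. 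Since $p>2$, the last term vanishes in the limit, and a standard monotonicity argument upgrades the gradient convergence to strong, so that $Z$ satisfies $-\Delta_p Z = \mu_* W_N(x)\,Z^{p-1}$ in $\Omega$ with $Z=0$ on $\partial\Omega$. The lower bound from part (1), applied to the $Y_k$, together with the normalization $\|Z_k\|_{L^\infty}=1$, prevents $Z\equiv 0$, and then the simplicity of the first eigenvalue forces $\mu_*=\lambda_{1N}$ and $Z$ to be a positive multiple of $\phi_{1N}$. This yields both the existence of a bifurcation from infinity and its uniqueness.

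The main technical obstacle I foresee is justifying the passage to the limit in the quasilinear term $-\Delta_p Z_k$: weak convergence in $W_0^{1,p}$ alone does not suffice to identify the limiting equation, so one must invoke a Browder--Minty or div-curl monotonicity trick to recover strong convergence of the gradients. A secondary issue is that the rescaling as written requires $p>2$ to ensure that $M_k^{2-p}\to 0$; since the blow-up construction via \eqref{eq41} already imposes $p>2$, this matches the intended regime, but a separate analysis (or an invocation of an abstract Rabinowitz-type bifurcation-from-infinity theorem) would be needed to cover $1<p<2$.
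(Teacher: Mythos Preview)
The paper does not give its own proof of this lemma; it merely invokes Lemma~5.1 of \citet{AA98}. Your direct argument is essentially the standard one behind that reference, so in spirit you are reproducing the cited proof rather than offering a genuinely different route.

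A few points deserve tightening. In part~(1), the quantity you exhibit, $\bigl(\mu/[N(\mu-\lambda_{1N})]\bigr)^{1/(p-2)}$, depends on $\mu$, whereas the application in the paper (the bifurcation branch ``cannot cross the level $\|Y\|_{L^\infty}=A$'') requires $A$ to be uniform in $\mu$. This is easily repaired: since $\mu/(\mu-\lambda_{1N})>1$ for every $\mu>\lambda_{1N}$, your inequality already gives the uniform bound $\|Y\|_{L^\infty}\geq N^{-1/(p-2)}=:A$; and your own computation shows no positive solution exists for $0<\mu\le\lambda_{1N}$.

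In part~(2), the sentence ``the lower bound from part~(1) \ldots\ prevents $Z\equiv 0$'' is not the right justification: part~(1) bounds $\|Y_k\|_{L^\infty}$ from below, which is vacuous after normalization $\|Z_k\|_{L^\infty}=1$, and weak $W^{1,p}_0$ (or even strong $L^p$) convergence does not transfer the $L^\infty$ normalization to the limit when $p\le n$. The standard fix is to use the uniform $C^{1,\alpha}$ estimates for $p$-Laplacian equations with bounded right-hand side to upgrade to convergence in $C(\overline\Omega)$, whence $\|Z\|_{L^\infty}=1$. Finally, your rescaling argument establishes that $\lambda_{1N}$ is the \emph{only possible} bifurcation point from infinity, but the claim ``this yields both the existence \ldots\ and its uniqueness'' overstates it: existence of the branch requires an additional topological degree / Rabinowitz-type input, which you allude to only in passing at the end.
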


As it is mention in  \citet{AA98}  the unbounded bifurcation branch of positive solutions given in Lemma \ref{lem2}(2) cannot cross the level $\|Y\|_{L^\infty(\Omega)}=A$ from Lemma \ref{lem2}(1), neither, obviously, the hyperplane $\mu=0$ for $\mu>\lambda_{1N}$. Hence, problem \eqref{eq42} has at least one positive solution for $\mu>\lambda_{1N}$, i.e., from \eqref{eq38} for $N>N_0$, $N_0$ sufficiently large.
\begin{theorem}[Comparison principle]
\label{th53}
Suppose $u(x,t)$, defined in \eqref{eq7} is solution of \eqref{eq8} and $v(x,t)$ is a positive sub-solution to \eqref{eq8}, i.e.,
\begin{equation}
\label{eq49}
\left\{\begin{array}{ll}
&v_t-\Delta_pv\leq\lambda W_N(x)v^{p-1}, \ \ \hbox{ for } x\in\Omega, t\in[0,\tau),
\\
&v(x,0)\leq u_0(x), \ \ u_0(x)>0 \ \ \hbox{ for } x\in\bar{\Omega},
\\
&v(x,t)\leq0 \ \ \hbox{ for } x\in\partial\Omega, t\in[0,\tau),
\end{array}\right.
\end{equation}
for a fixed $\tau$. If
\begin{equation}
\label{eq491}
\|u(.,t)\|_{L^\infty(\Omega)}\leq M<\infty , \|v(.,t)\|_{L^\infty(\Omega)}\leq M<\infty
\ \ \hbox{ for } t\geq0,
\end{equation}
then
\begin{equation}
\label{eq50}
v(x,t)\leq u(x,t), \ \ \hbox{ for } x\in\Omega, t\in[0,\tau).
\end{equation}
\end{theorem}
\begin{proof}
Suppose by contradiction that \eqref{eq50} fails, i.e., the set $Q=\Omega_+(s)\times[0,\tau)$, $\Omega_+(s)=\{x\in\Omega, s\in[0,\tau);  v(x,s)>u(x,s)\}$ is not empty.
We change the variables $v(t,x)=e^{Kt}z(t,x)$, $u(t,x)=e^{Kt}w(t,x)$ where
\begin{equation}
\label{eq492}
K=\lambda N(p-1)(2M)^{p-2}.
\end{equation}
Multiplying \eqref{eq49} and \eqref{eq8} after the change of variables with $(z-w)_+$ where
$$
(z-w)_+=\left\{\begin{array}{l}z-w, \quad \hbox{ for } z>w,\\0, \quad \hbox{ for } z\leq w\end{array}\right., (z-w)_+\in W_0^{1,p}(\Omega\times(0,\tau)),
$$
 and integrating in $(0,t)\times\Omega, t<\tau$ we get for their difference
\begin{equation}
\label{eq51}
\begin{array}{ll}
0\geq&\int_0^t\int_{\Omega_+(s)}e^{Ks}(z-w)_+(z-w)_tdsdx
\\
&-\int_0^t\int_{\Omega_+(s)}e^{(p-1)Ks}(z-w)\hbox{div}\left[|\nabla z|^{p-2}\nabla z-|\nabla w|^{p-2}\nabla w\right](z-w)dxds
\\
&+\int_0^t\int_{\Omega_+(s)}Ke^{Ks}(z-w)^2-\lambda\int_0^t\int_{\Omega_+(s)}e^{(p-1)Ks}W_N(x)\left[|z|^{p-2}z-|w|^{p-2}w\right](z-w)dxds
\\
&=\frac{1}{2}\int_{\Omega_+(t)}\left[z(x,t)-w(x,t)\right]^2e^{Kt}dx-\frac{1}{2}\int_{\Omega_+(0)}\left[z(x,0)-w(x,0)\right]^2dx
\\
&+\int_0^t\int_{\Omega_+(s)}\left[Ke^{Ks}(z-w)^2-\lambda e^{(p-1)Ks}W_N(x)(|z|^{p-2}z-|w|^{p-2}w)\right](z-w)dsdx
\\
&+\int_0^t\int_{\Omega_+(s)}e^{(p-1)Ks}\left[|\nabla z|^{p-2}z_{x_i}(z_{x_i}-w_{x_i})-|w|^{p-2}w_{x_i}(z_{x_i}-w_{x_i})\right]dsdx
\\
&=\frac{1}{2}\int_{\Omega_+(t)}(z(x,t)-w(x,t)^2dx-\frac{1}{2}\int_{\Omega_+(0)}(z(x,0)-w(x,0)^2dx
\\
&+\int_0^t\int_{\Omega_+(s)}\left[(z-w)^2K-\lambda W_N(x)(z-w)(|z|^{p-2}z-|w|^{p-2}w)\right]dxds
\\
&+\int_0^t\int_{\Omega_+(t)}\left[|\nabla z|^{p-2}z_{x_i}(z_{x_i}-w_{x_i})-|\nabla w|^{p-2}|w_{x_i}(z_{x_i}-w_{x_i})\right]dsdt
\end{array}
\end{equation}
Simple computation give us
\begin{equation}
\label{eq52}
\begin{array}{ll}
&|\nabla z|^{p-2}z_{x_i}(z_{x_i}-w_{x_i})-|\nabla w|^{p-2}u_{x_i}(z_{x_i}-w_{x_i})
\\
&=|\nabla z|^p-|\nabla z|^{p-2}z_{x_i}w_{x_i}+|\nabla w|^p-|\nabla w|^{p-2}z_{x_i}w_{x_i}
\\
&=\frac{1}{2}|\nabla z|^p+\frac{1}{2}|\nabla z|^{p-2}|\nabla(z-w)|^2+\frac{1}{2}|\nabla w|^p
\\
&+\frac{1}{2}|\nabla w|^{p-2}|\nabla(z-w)|^2-\frac{1}{2}|\nabla z|^{p-2}|\nabla w|^2-\frac{1}{2}|\nabla w|^{p-2}|\nabla z|^2
\\
&=\frac{1}{2}\left[|\nabla z|^{p-2}+|\nabla w|^{p-2}\right]|\nabla(w-z)|^2
\\
&+\frac{1}{2}\left[|\nabla z|^{p-2}-|\nabla w|^{p-2}\right]\left[|\nabla z|^2-|\nabla w|^2\right]\geq0.
\end{array}
\end{equation}
Since
$$
\int_{\Omega_+(0)}\left[z(x,0)-w(x,0)\right]^2dx=0,
$$
and
\begin{equation}
\label{eq493}
\begin{array}{ll}
&0\leq(|z|^{p-2}z-|w|^{p-2}w)( z-w)=(p-1)(z-w)^2\int_0^1|(1-\theta)z+\theta w|^{p-2}d\theta
\\
&\leq (p-1)(z-w)^2(|z|+|w|)^{p-2}\leq(p-1)(2M)^{p-2}(z-w)^2
\end{array}
\end{equation}
from \eqref{eq492} - \eqref{eq493} we get for every $t\in[0,\tau)$
$$
\int_{\Omega_+(t)}\left[v(x,t)-u(x,t)\right]^2dx=0.
$$
Thus $v(x,t)=u(x,t)$ for every $x\in\Omega_+(t)$ and every $t\in[0,\tau)$, i.e., for $(x,t)\in Q$ and $Q=\emptyset$ which contradicts our assumption.
\end{proof}
\begin{corollary}
\label{cor3-1}
Suppose  $u(x,t)$ is a solution of \eqref{eq2-7-2} and $v(x,t)$ is a positive subsolution to \eqref{eq2-7-2}, i.e.,
\begin{equation}
\label{eq2-7-4}
\left\{\begin{array}{ll}
&v_t-\hbox{div}\left(h_N(x)\nabla v|^{p-2}\nabla v\right)-\mu \psi_N(x)|v|^{p-2}v\leq f(x,t), \ \ \hbox{ in } \Omega\times(0,T),
\\
&v(0,t)=0 \ \ \hbox{ for } (x,t)\in\partial\Omega\times[0,T],
\\
&v(x,0)\leq u_0(x), u_0(x)>0 \ \ \hbox{ for } x\in\Omega.
\end{array}\right.
\end{equation}
If \eqref{eq2-7-3} holds and $\|u\|_{L^\infty}(\Omega)\leq M$, $\|v\|_{L^\infty}(\Omega)\leq M$ for every $t\in[0,T]$ then
\begin{equation}
\label{eq2-7-5}
v(x,t)\leq u(x,t), \quad \hbox{for } (x,t)\in \partial\Omega\times[0,T].
\end{equation}
\end{corollary}
The proof of Corollary \ref{cor3-1} is identical  with the proof of Theorem \ref{th53}.

The following theorem is the main result in this section.
\begin{theorem}
\label{th52}
Suppose $p>n\geq2$, $u_0\in L^\infty(\Omega)$, $u_0(x)>0$ in $\bar{\Omega}$ and $\mu>C$, where $C$ is given in \eqref{eq99}. Then the solution of \eqref{eq8} blows-up for a finite time.
\end{theorem}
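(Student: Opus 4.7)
The plan is to build a positive sub-solution $v$ of the truncated problem \eqref{eq8} that blows up in finite time, invoke the comparison principle (Theorem \ref{th53}) to force $u_N$ to blow up, and then pass the blow-up back to the solution $u$ of \eqref{eq35} via the monotonicity $u_N \leq u$ already established in Section \ref{sec2}. The sub-solution takes the separation-of-variables form $v(x,t) = X(x)T(t)$ suggested by \eqref{eq40}--\eqref{eq41}.

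First I would fix $N > N_0$ large enough that $\mu > \lambda_{1N}$, which is possible by \eqref{eq38} since $\mu > C$. The bifurcation-from-infinity statement of Lemma \ref{lem2}(2), together with the lower bound in Lemma \ref{lem2}(1), yields a positive solution $Y \in W_0^{1,p}(\Omega) \cap L^\infty(\Omega)$ of \eqref{eq42}; then $X := Y/a$ with $a = \mu^{1/(p-2)}$ solves \eqref{eq40}, as the homogeneity identity $\Delta_p(Y/a) = a^{-(p-1)}\Delta_p Y$ shows. The ODE \eqref{eq41} integrates explicitly to $T(t) = T_0[1-(p-2)T_0^{p-2}t]^{-1/(p-2)}$ with blow-up time $t^* = [(p-2)T_0^{p-2}]^{-1}$, and picking $T_0$ small ensures both that $t^*$ is as large as we wish and that $T_0 X(x) \leq u_0(x)$ on $\bar\Omega$ (using $\min_{\bar\Omega} u_0 > 0$ and $X \in L^\infty(\Omega)$).

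A direct computation based on the $(p-1)$-homogeneity of the $p$-Laplacian gives
\begin{equation*}
v_t - \Delta_p v \;=\; X T^{p-1} - T^{p-1}\Delta_p X \;=\; X T^{p-1} + T^{p-1}\bigl(-X + \mu W_N X^{p-1}\bigr) \;=\; \mu W_N v^{p-1},
\end{equation*}
so $v$ solves the PDE in \eqref{eq8} with equality, vanishes on $\partial\Omega$ because $X$ does, and satisfies $v(\cdot,0) = T_0 X \leq u_0$. Hence $v$ meets the hypotheses \eqref{eq49} of Theorem \ref{th53}, yielding $v(x,t) \leq u_N(x,t)$ on $\Omega \times [0,t^*)$. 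Selecting any $x_0 \in \Omega$ with $X(x_0) > 0$, we conclude $u_N(x_0,t) \geq X(x_0) T(t) \to \infty$ as $t \to t^*$, so $u_N$ blows up at or before $t^*$; the comparison $W_N \leq W$ transfers this blow-up to $u$.

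The main obstacle is the elliptic step, namely producing a positive bounded weak solution $X$ of \eqref{eq40} precisely in the regime $\mu > \lambda_{1N}$. This is the place where the new optimal Hardy constants from Section \ref{sec2} are indispensable: without $\lambda_{1N} \to C$ (Lemma \ref{lem1}) one could not guarantee that the assumption $\mu > C$ puts us above the bifurcation threshold of the truncated problem for some $N$. A secondary technical point is confirming that $v$ fits the functional framework of Theorem \ref{th53} on compact subintervals of $[0,t^*)$, which is immediate from the explicit form of $T$ and the regularity of $X$ inherited from the bifurcation branch.
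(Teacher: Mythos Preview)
Your approach is essentially the same as the paper's: choose $N$ large so that $\mu>\lambda_{1N}$ via Lemma~\ref{lem1}, use Lemma~\ref{lem2} to produce the positive solution $X$ of \eqref{eq40}, form $v=XT$ with $T$ from \eqref{eq41} and $T_0$ small enough that $T_0X\le u_0$, verify $v$ solves \eqref{eq8} with equality, and apply Theorem~\ref{th53} to bound $u_N$ below by a function blowing up at $t^*=[(p-2)T_0^{p-2}]^{-1}$. Your write-up in fact makes several steps more explicit than the paper does (the rescaling $X=Y/a$, the homogeneity computation for $v_t-\Delta_p v$, and the justification of $T_0X\le u_0$ via $X\in L^\infty$).
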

\begin{proof}
Let $X(x)$ be the positive solution of \eqref{eq40} according to Lemma \ref{lem2} and $T(t)$ be the solution of \eqref{eq41} with $T_0=\varepsilon$, $\varepsilon$ small enough such that
\begin{equation}
\label{eq46}
\varepsilon X(x)\leq u_0(x), \ \ \hbox{for } x\in \Omega.
\end{equation}
Thus $v(x,t)=X(x)T(t)$ is a positive solution of the problem
\begin{equation}
\label{eq47}
\left\{\begin{array}{ll}
&v_t-\Delta_pv=\mu W_Nv^{p-1}, \ \ \hbox{ for } x\in\Omega, t\in(0,T_{max}),
\\
&v(x,0)=\varepsilon X(x)\geq0 \ \ \hbox{ for } x\in\Omega,
\\
&v(x,t)=0 \ \ \hbox{ for } x\in\partial\Omega, t\in(0,T_{max}).
\end{array}\right.
\end{equation}
Here $T_{max}$ is defined as
$$
T_{max}=\left[(p-2)\varepsilon^{p-2}\right]^{-1},
$$
and $[0,T_{max})$ is the maximal existence time interval for the solution of \eqref{eq47}.

From  \eqref{eq46} it follows that $v(x,t)$ is a positive sub-solution to \eqref{eq8}.

According the comparison principle, Theorem \ref{th53}, we get
\begin{equation}
\label{eq48}\begin{array}{ll}
&u_N(x,t)\geq X(x)T(t)=\varepsilon X(x)\left[1-(p-2)\varepsilon^{p-2}t\right]^{-\frac{1}{p-2}}\rightarrow\infty
\\
&\hbox{for } t\rightarrow\left[(p-2)\varepsilon^{p-2}\right]^{-1}.
\end{array}
\end{equation}
Since the solutions of \eqref{eq1} are defined as the limit of the solutions $u_N(x,t)$ of the truncated problem \eqref{eq8}, then from \eqref{eq48} it follows that $u(x,t)$ blows-up for a finite time.
\end{proof}

\end{document}